\documentclass[12pt]{amsart}
\usepackage{lipsum}
\usepackage{amssymb}
\usepackage{enumerate}
\usepackage{amsthm}
\usepackage{tikz-cd}
\usepackage{amsmath}
\usepackage[mathscr]{euscript}
\usepackage[utf8]{inputenc}
\usepackage[english]{babel}
\usepackage{hyperref}
\hypersetup{
    colorlinks=true,
    linkcolor=blue,
    citecolor=red,
    filecolor=red,      
    urlcolor=violet,
}

\urlstyle{same}

\makeatletter
\@namedef{subjclassname@2020}{%
  \textup{2020} Mathematics Subject Classification}
\makeatother

\newtheorem{thm}{Theorem}[section]
\newtheorem{lem}[thm]{Lemma}

\newtheorem{corl}[thm]{Corollary}
\newtheorem{xrem}{\rm Remark}
\newtheorem{exm}[thm]{\rm Example}

\DeclareMathOperator{\Nef}{{Nef}}

\DeclareMathOperator{\rank}{{rank}}
\DeclareMathOperator{\NE}{{NE}}

\DeclareMathOperator{\Div}{{Div}}
\DeclareMathOperator{\Bl}{{Bl}}

\frenchspacing

\textwidth=17.5cm
\textheight=23cm
\parindent=16pt
\oddsidemargin=-0.5cm
\evensidemargin=-0.5cm
\topmargin=-0.5cm

\begin{document}
\baselineskip=17pt

\subjclass[2020]{Primary 14C20, 14E30, 14E25, 14J60 ; Secondary 14J26}
\keywords{Nef cone, Mori cone of curves, Seshadri constants, Projective bundle, semistability}

\author{Snehajit Misra}
\author{Nabanita Ray} 

\address{Tata Institute of Fundamental Research (TIFR), Homi Bhabha Road, Mumbai 400005, India.}
\email[Snehajit Misra]{smisra@math.tifr.res.in}

\address{Tata Institute of Fundamental Research (TIFR), Homi Bhabha Road, Mumbai 400005, India.}
\email[Nabanita Ray]{nray@math.tifr.res.in}

\begin{abstract}
In this article, we give a description of the closed cone of curves of the projective bundle $\mathbb{P}(E)$ over a smooth 
projective variety $X$. Using duality, we then calculate the nef cone of divisors in $\mathbb{P}(E)$ over some special surfaces $X$ and for some special bundles on $X$.
As an application, we also calculate the Seshadri constants of  semistable ample vector bundles with vanishing discriminant on some special ruled surfaces at special points.
\end{abstract}

\title{Nef cones of projective bundles over surfaces and Seshadri constants}
\maketitle

\section{Introduction}
 A fundamental invariant of a projective variety $X$ is its cone of nef divisors, denoted by $\Nef(X)$. It encodes all the information about embeddings of $X$ in projective spaces. Also, knowledge of these cones can be used to study positivity questions, interpolations problems, Seshadri constants etc. Nef cones of a wide class of varieties have been calculated over the last few years (see \cite{L1}, \cite{Fu}, \cite{M-O-H}, \cite{K-M-R}).

In his paper \cite{M}, Yoichi Miyaoka studied the nef cone of a projective bundle $\mathbb{P}(E)$ over a smooth 
complex projective curve $C$, where $E$ is any semistable vector bundle of rank $r$ on $C$. More generally, it is showed in \cite{Fu} that $\Nef\bigl(\mathbb{P}(E)\bigr)$ is determined by the numerical data $\mu_{\min}(E)$ in the Harder-Narasimhan filtration of $E$. In these cases, the Picard number of $\mathbb{P}(E)$ is 2, and hence the nef cones are generated by two extremal rays in
a two dimensional space. In general, when the Picard number is at least 3, the nef cones might not be a finite polyhedron, and hence are not so easy to calculate. For example if $E$ is a rank 2 bundle obtained by the
Serre construction from the ideal sheaf of 10 very general points on $\mathbb{P}^2$, then the positivity
of $E$ is related to the Nagata conjecture. Thus one has to settle for special bundles $E$ on
special varieties $X$, even when dimension of $X$ is 2. 
Motivated by this, we consider a projective bundle $\mathbb{P}(E)$ over a smooth irreducible complex projective variety $X$ together with the projection 
$\pi : \mathbb{P}(E) \longrightarrow X$. In section 3, we prove the following:
\begin{thm}\label{thm1.1}
 Let $E$ be a  vector bundle of rank $r \geq 2$ on a smooth complex projective variety $X$. For an irreducible curve $C$ in $X$ together with its normalization $\eta_c : \tilde{C}\longrightarrow C$, consider the following fibre product diagram: 
\begin{center}
 \begin{tikzcd}
\tilde{C}\times_X\mathbb{P}(E)=\mathbb{P}\bigl(\eta_c^*(E\vert_C)\bigr) \arrow[r, "\tilde{\eta_c}"] \arrow[d, "\tilde{\pi_c}"] & C\times_X \mathbb{P}(E) = \mathbb{P}(E\vert_C) \arrow[r, "j"] \arrow[d,"\pi_c"] & \mathbb{P}(E) \arrow[d,"\pi"] \\
\tilde{C} \arrow[r, "\eta_c"]          & C \arrow[r, "i"]                                         & X
\end{tikzcd}
\end{center}
where $i$ and $j$ are inclusions. We define $\psi_c:= j \circ \tilde{\eta_c}$. 
Then the closed cone of curves in $\mathbb{P}(E)$ is given by
 \begin{center}
$ \overline{\NE}\bigl(\mathbb{P}(E)\bigr) = \overline{\sum\limits_{C\in \Gamma}\bigl(\psi_{c}\bigr)_* \Bigl(\overline{\NE}\bigl(\mathbb{P}(\eta_c^*(E\vert_C))\bigr)\Bigr)},$
  \end{center}
 where $\Gamma$ is the set of all irreducible curves in $X$.
\end{thm}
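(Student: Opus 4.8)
The plan is to prove the stated equality by establishing the two inclusions between $\overline{\NE}(\mathbb{P}(E))$ and $\overline{S}$, where I write $S := \sum_{C\in\Gamma}(\psi_c)_*\bigl(\overline{\NE}(\mathbb{P}(\eta_c^*(E|_C)))\bigr)$ for the (a priori non-closed) convex cone appearing on the right-hand side. The nontrivial inclusion will be reduced to a simple dichotomy: an irreducible curve in $\mathbb{P}(E)$ is either contained in a fibre of $\pi$ or is mapped by $\pi$ onto an irreducible curve of $X$.

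First I would dispose of $\overline{S}\subseteq\overline{\NE}(\mathbb{P}(E))$. Each $\psi_c=j\circ\tilde{\eta_c}$ is a finite morphism, being the composite of the closed immersion $j$ with $\tilde{\eta_c}$, the base change of the normalization $\eta_c:\tilde{C}\to C$; hence $\psi_c$ contracts no curve, so $(\psi_c)_*$ maps $\NE(\mathbb{P}(\eta_c^*(E|_C)))$ into $\NE(\mathbb{P}(E))$, and being linear (hence continuous) it maps the closure of the source cone into $\overline{\NE}(\mathbb{P}(E))$. As $\overline{\NE}(\mathbb{P}(E))$ is a closed convex cone it then contains every finite Minkowski sum of such images, that is, it contains $S$, and therefore also $\overline{S}$.

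The content lies in the reverse inclusion, for which it suffices to show that $[Z]\in S$ for every irreducible curve $Z\subset\mathbb{P}(E)$: once this is known, $\NE(\mathbb{P}(E))\subseteq S$ because $S$ is a convex cone, and passing to closures gives $\overline{\NE}(\mathbb{P}(E))\subseteq\overline{S}$. If $\pi(Z)$ is a single point $x$, I would pick any irreducible curve $C\subset X$ through $x$ (one exists since $\dim X\geq 1$) and a point $\tilde{x}\in\tilde{C}$ lying over $x$; since the formation of $\mathbb{P}(-)$ is compatible with base change, as recorded in the fibre-product diagram, $\psi_c$ restricts to an isomorphism from the fibre $\tilde{\pi_c}^{-1}(\tilde{x})$ onto the fibre $\pi^{-1}(x)\cong\mathbb{P}(E_x)\subset\mathbb{P}(E)$, so $Z$ is the isomorphic image of an irreducible curve $Z'$ in that fibre and $[Z]=(\psi_c)_*[Z']\in S$. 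If instead $\pi(Z)=C$ is an irreducible curve, then $Z\subset\pi^{-1}(C)=\mathbb{P}(E|_C)$, and since $\tilde{\eta_c}$ is finite and surjective there is an irreducible curve $W\subset\mathbb{P}(\eta_c^*(E|_C))$ dominating $Z$, namely the reduction of a component of $\tilde{\eta_c}^{-1}(Z)$ whose image is all of $Z$. By functoriality of pushforward, $(\psi_c)_*[W]=j_*(\tilde{\eta_c})_*[W]=d\,[Z]$, where $d=[\kappa(W):\kappa(Z)]\geq 1$ and I use that $j_*[Z]=[Z]$ because $j$ is a closed immersion; hence $[Z]=\tfrac{1}{d}(\psi_c)_*[W]\in S$. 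Combining the two inclusions yields $\overline{\NE}(\mathbb{P}(E))=\overline{S}$, which is the assertion.

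I do not expect a real geometric obstacle; the only care needed is in the identifications attached to the fibre product---checking that $\pi^{-1}(C)=\mathbb{P}(E|_C)$ and that $\psi_c$ induces an isomorphism of the relevant fibres---together with two harmless bookkeeping points: that when $\tilde{\eta_c}$ is ramified along $W$ the class $[Z]$ is recovered only up to the positive rational factor $1/d$ (absorbed because $S$ is a cone), and that the closure must be placed around the whole sum $S$, which a priori need not be closed although it is squeezed between $\NE(\mathbb{P}(E))$ and $\overline{\NE}(\mathbb{P}(E))$.
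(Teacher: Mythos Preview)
Your proof is correct and follows essentially the same approach as the paper: both establish the two inclusions, with the substantive direction handled by the dichotomy that an irreducible curve in $\mathbb{P}(E)$ either lies in a fibre of $\pi$ or dominates an irreducible curve $C\subset X$, and in the latter case is the image of a curve upstairs in $\mathbb{P}(\eta_c^*(E\vert_C))$. The only cosmetic difference is that the paper asserts a \emph{unique} curve $C'$ with $(\psi_c)_*[C']=[\bar C]$ (implicitly using that $\tilde{\eta_c}$, being the base change of the birational normalization $\eta_c$, is itself birational, so $d=1$), whereas you allow a possible degree factor $d\geq 1$ and absorb it into the cone; your version is slightly more robust and equally valid.
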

Consequently, we calculate the nef cones $\Nef\bigl(\mathbb{P}(E)\bigr)$ of some special projective bundles $\mathbb{P}(E)$ over some special smooth surfaces $X$ (see Corollary \ref{corl3.4}, Corollary \ref{cor3.5}) by applying duality to Theorem \ref{thm1.1}. An important finding in section 3 is the following:
\begin{corl}
Let $X$ be a smooth complex projective surface with
 $\overline{\NE}(X) = \bigl\{ a_1[C_1] + a_2[C_2] +\cdots+a_n[C_n] \mid a_i \in \mathbb{R}_{\geq 0}\bigr\}$ for  some irreducible curves $C_1,C_2,\cdots,C_n$  in $X$. If $E$ is a semistable vector bundle of rank $r\geq2$ on $X$ with vanishing
discriminant, i.e. $\triangle(E) := 2rc_2(E)-(r-1)c_1^2(E) = 0$, then 
\begin{center}
  $ \Nef\bigl(\mathbb{P}(E)\bigr)
   =\Bigl\{ y_0\xi+\pi^*\gamma \mid \gamma\in N^1(X)_{\mathbb{R}},  y_0\geq 0$, $   y_0\mu(E\vert_{C_j}) + (\gamma\cdot C_j)\geq0$ for all $ 1\leq j\leq n \Bigr\}$,
\end{center}
where $\xi$  denotes the numerical equivalence classes of 
the tautological bundle $\mathcal{O}_{\mathbb{P}(E)}(1)$.
 \end{corl}

We also give several examples to illustrate our results. In most of these examples, the projective bundles  $\mathbb{P}(E)$ have Picard number at least 3, and the nef cones $\Nef\bigl(\mathbb{P}(E)\bigr)$ are all finite polyhedra.
\vspace{1mm}

A vector bundle $V$ on $X$ is called ample if $\mathcal{O}_{\mathbb{P}(V)}(1)$ is ample on $\mathbb{P}(V)$. The Seshadri constant $\varepsilon(V,x)$ of an ample vector bundle $V$ on a projective variety X at a closed point $x \in X$ is introduced in \cite{Hac}.  Let us consider the following pullback diagram under the blow up map $\rho_x : \tilde{X}_x = \Bl_x(X) \longrightarrow X$
\begin{center}
\begin{tikzcd} 
\mathbb{P}(V)\times_X \tilde{X}_x = \mathbb{P}\bigl(\rho_x^*(V)\bigr) \arrow[r, "\tilde{\rho_x}"] \arrow[d, "\pi"]
 & \mathbb{P}(V) \arrow[d,"\pi'"]\\
 \tilde{X}_x \arrow[r, "\rho_x" ]
 & X
\end{tikzcd}
\end{center}
 Let $\tilde{\xi}_x$ be the numerical equivalence class of the tautological bundle $\mathcal{O}_{\mathbb{P}(\rho^*_xV)}(1)$, and 
 $\tilde{E}_x := \tilde{\rho_x}^{-1}(F_x)$, where $F_x$ is the class of the fibre of the map $\pi'$ over the point $x$. Then the 
 Seshadri constant of $V$ at $x\in X$ is defined as 
 \begin{align*}
  \varepsilon(V,x) := \sup\Bigl\{ \lambda \in \mathbb{R}_{>0} \mid \tilde{\xi}_x - \lambda\tilde{E}_x \rm\hspace{1mm} is \hspace{1mm} nef \Bigr\}. 
 \end{align*}
Note that the Seshadri constant $\varepsilon(V,x)$ is not equal to the Seshadri constant of the tautological line bundle $\mathcal{O}_{\mathbb{P}(V)}(1)$ on $\mathbb{P}(V)$. These interesting constants have been extensively studied by many authors especially for line bundles on algebraic surfaces (see \cite{L1}, Chapter 5 and \cite{B-D-H-K-K-S-S}). In \cite{Hac}, it is shown that if $V$ is an ample vector bundle on a smooth projective curve $C$, then the Seshadri constant $\varepsilon(V,x) = \mu_{\min}(V)$ at $x\in C$. Recently, Seshadri constants for equivariant bundles have been calculated on toric varieties in \cite{HMP} and on certain homogeneous varieties in \cite{BHN20}.

As an application of our results in section 3, we calculate the Seshadri constant of semistable ample vector bundles with vanishing discriminant on some special ruled surfaces at special points (see Theorem \ref{thm4.2}).
\section{Preliminaries}
Throughtout this article, all the varieties are assumed to be over the field of complex numbers $\mathbb{C}$.
 In this section, we recall the definition and duality property of nef cone as well as the definition of semistability of vector bundles on smooth projective varieties. Let $X$ be a smooth complex projective variety of dimension $n$ with a fixed ample line bundle $H$ on it.
 \subsection{Harder-Narasimhan Filtration:}
 A non-zero torsion-free coherent sheaf $\mathcal{G}$ on $X$ is said to be $H$-semistable if $$\mu_H(\mathcal{F}) = \frac{c_1(\mathcal{F})\cdot H^{n-1}}{\rank(\mathcal{F})} \leq \mu_H(\mathcal{G}) = \frac{c_1(\mathcal{G})\cdot H^{n-1}}{\rank(\mathcal{G})}$$ for 
all subsheaves $\mathcal{F}$ of $\mathcal{G}$.
For every vector bundle $E$ on $X$, there is a unique filtration
\begin{align*}
 0 = E_0 \subsetneq E_1 \subsetneq E_2 \subsetneq\cdots\subsetneq E_{k-1} \subsetneq E_k = E
\end{align*}
of subbundles of $E$, called the Harder-Narasimhan filtration of $E$, such that $E_i/E_{i-1}$ is $H$-semistable torsion free sheaf
for each $i \in \{ 1,2,\cdots,k\}$
and $\mu_H\bigl(E_i/E_{i-1}\bigr) > \mu_H\bigl(E_{i+1}/E_{i}\bigr)$ for each $i \in \{1,2,\cdots,k-1\}$. 
We define $Q_k := E_{k}/E_{k-1}$ and $\mu_{\min}(E) := \mu_H(Q_k) = \mu_H\bigl(E_{k}/E_{k-1}\bigr)$. 
\subsection{Nef cone} Let 
  $\Div^0(X) := \bigl\{ D \in \Div(X) \mid D\cdot C = 0 $ for all curves $C$ in $X \bigr\}$ be the subgroup of $\Div(X)$ consisting of numerically trivial divisors. The quotient $\Div(X)/\Div^0(X)$ is called the N\'{e}ron Severi group of $X$, and is denoted by $N^1(X)_{\mathbb{Z}}$.
   The N\'{e}ron Severi group  $N^1(X)_{\mathbb{Z}}$ is a free abelian group of finite rank.
 Its rank, denoted by $\rho(X)$ is called the Picard number of $X$. In particular,  $N^1(X)_{\mathbb{R}}  := N^1(X)_{\mathbb{Z}} \otimes \mathbb{R} = \bigl(\Div(X)/\Div^0(X)\bigr) \otimes \mathbb{R}$ is called the real N\'{e}ron Severi group.
 
 A Cartier divisor $D$ on $X$
 (with $\mathbb{Z}, \mathbb{Q}$ or $\mathbb{R}$ coefficients) is called nef if $D\cdot C \geq 0$ for all irreducible curves $C \subseteq X$. The intersection product being independent of numerical equivalence class, one can talk about nef classes in $N^1(X)_\mathbb{R}$. The convex cone of all nef classes in $N^1(X)_\mathbb{R}$ is called the nef cone of $X$, and is denoted by $\Nef(X)$.
 
 A formal sum $\gamma = \sum\limits_{i}a_iC_i$, where $a_i \in \mathbb{R}$ and $C_i \subseteq X$ are irreducible curves in $X$, is called a real 1-cycle in $X$. The $\mathbb{R}$-vector space of all real 1-cycle in $X$ is denoted by $Z_1(X)_\mathbb{R}$. Two elements $\gamma_1,\gamma_2 \in Z_1(X)_\mathbb{R}$ are called numerically equivalent, denoted by $\gamma_1 \equiv \gamma_2$ if $D\cdot\gamma_1 = D\cdot \gamma_2$ for every divisor $D$ in $X$. We denote $N_1(X)_\mathbb{R} = Z_1(X)_\mathbb{R}/\equiv$. The cone of curves in $X$, denoted by $\NE(X)$ is the cone spanned by the effective 1-cycle classes in $N_1(X)_{\mathbb{R}}$. If $C$ is an effective curve, then we denote the numerical equivalence class of curves by $[C]$.
 
 The intersection pairing $ N^1(X)_\mathbb{R} \times N_1(X)_\mathbb{R} \longrightarrow \mathbb{R}$ induces a perfect pairing. The closed cone of curves $\overline{\NE}(X)$, which is defined to be the closure of $\NE(X)$, is known to be the dual to the nef cone $\Nef(X)$, i.e.
 \begin{center}
 $\overline{\NE}(X) = \bigl\{ \gamma \in N_1(X)_{\mathbb{R}} \mid \alpha \cdot \gamma \geq 0 $ for all $\alpha \in \Nef(X)\bigr\}.$
\end{center}
\section{Main Results}
We quickly recall our set up. Let $E$ be a vector bundle of rank $\geq 2$ on a smooth complex projective variety $X$, and $\pi:\mathbb{P}(E)\longrightarrow X$ be the projection.
In this section, we give a description of the closed cone $\overline{\NE}\bigl(\mathbb{P}(E)\bigr)$ of curves in $\mathbb{P}(E)$  in terms of the closed cone of curves $\overline{\NE}\bigl(\mathbb{P}(E\vert_C)\bigr)$ for every irreducible curve $C$ in $X$. 
\begin{proof}[Proof of Theorem \ref{thm1.1}]\label{prop3.3}
For an irreducible curve $C$ in $X$ together with its normalization $\eta_c : \tilde{C}\longrightarrow C$, we recall the following fibre product diagram: 
\begin{center}
 \begin{tikzcd}
\tilde{C}\times_X\mathbb{P}(E)=\mathbb{P}\bigl(\eta_c^*(E\vert_C)\bigr) \arrow[r, "\tilde{\eta_c}"] \arrow[d, "\tilde{\pi_c}"] & C\times_X \mathbb{P}(E) = \mathbb{P}(E\vert_C) \arrow[r, "j"] \arrow[d,"\pi_c"] & \mathbb{P}(E) \arrow[d,"\pi"] \\
\tilde{C} \arrow[r, "\eta_c"]          & C \arrow[r, "i"]                                         & X
\end{tikzcd}
\end{center}
where $i$ and $j$ are inclusions and $\psi_c:= j \circ \tilde{\eta_c}$. 
 Let $\xi \in N^1\bigl(\mathbb{P}(E)\bigr)_{\mathbb{R}}$ be the numerical equivalence class of the tautological line bundle $\mathcal{O}_{\mathbb{P}(E)}(1)$ on $\mathbb{P}(E)$. 
 Then $$N^1\bigl(\mathbb{P}(E)\bigr)_{\mathbb{R}} = \mathbb{R}\xi \oplus \pi^*N^1(X)_{\mathbb{R}}.$$
 We fix the notations $\xi_{\tilde{c}}$ and $f_{\tilde{c}}$ for the numerical equivalence classes of the tautological line bundle 
 $\mathcal{O}_{\mathbb{P}(\eta_c^*(E\vert_C))}(1)$ of $\mathbb{P}\bigl(\eta_c^*(E\vert_C)\bigr)$ and a fibre of the map 
 $\tilde{\pi_{c}}$ respectively. Then $N^1\bigl(\mathbb{P}(\eta_c^*(E\vert_C)\bigr)_{\mathbb{R}}$ is generated by $\xi_{\tilde{c}}$ and $f_{\tilde{c}}$.
 The map $\psi_c$ induces the map 
 \begin{center}
 $\psi_{c}^{*}:N^1\bigl(\mathbb{P}(E)\bigr)_{\mathbb{R}}\longrightarrow N^1\bigl(\mathbb{P}(\eta_c^*(E\vert_C)\bigr)_{\mathbb{R}}$ 
 \end{center}
 between the N\'{e}ron Severi groups such that
 \begin{center}
 $\psi_{c}^{*}(\xi)=\xi_{\tilde{c}}$ and $\psi_{c}^{*}\bigl(\pi^*L\bigr)= (L\cdot C)f_{\tilde{c}},$ for any $L\in N^1(X)_{\mathbb{R}}.$ 
 \end{center}
 As a consequence, we get the pushforward map 
 $$(\psi_c)_* : N_1\bigl(\mathbb{P}(\eta_c^*(E\vert_C)\bigr)_{\mathbb{R}} \longrightarrow N_1\bigl(\mathbb{P}(E)\bigr)_{\mathbb{R}}.$$
 Hence
 \begin{align*}
 \overline{\sum\limits_{C\in \Gamma}(\psi_{c})_{*} \Bigl(\overline{\NE}\bigl(\mathbb{P}(\eta_c^*(E\vert_C))\bigr)\Bigr)} \subseteq \overline{\NE}\bigl(\mathbb{P}(E)\bigr),
\end{align*}
where the sum is taken over the set $\Gamma$ of all irreducible curves in $X$.
Now to prove the reverse inequality, we consider the  numerical equivalence class $\bigl[\bar{C}\bigr]\in \NE\bigl(\mathbb{P}(E)\bigr)$ of an irreducible curve $\bar{C}$ in $\mathbb{P}(E)$ which is not contained in any fibre of $\pi$.
Denote $\pi(\bar{C})=C$. Then, $\bar{C}\subseteq \mathbb{P}(E\vert_C)$. 
Then there exists a unique irreducible curve $C'\subseteq \mathbb{P}\bigl(\eta_c^*(E\vert_C)\bigr)$  such that $\tilde{\eta_c}(C') = \bar{C}$ and $\bigl(\psi_{c}\bigr)_*\bigl([C']\bigr)=\bigl[\bar{C}\bigr]$. Also, the numerical equivalence classes of curves in a fibre of $\tilde{\pi_{c}}$ maps to the numerical classes of curves in a fibre of $\pi$ by 
 $\bigl(\psi_{c}\bigr)_*$.
Hence, we have
\begin{align*}
 \overline{\NE}\bigl(\mathbb{P}(E)\bigr)\subseteq\overline{\sum\limits_{C\in \Gamma}\bigl(\psi_{c}\bigr)_* \Bigl(\overline{\NE}\bigl(\mathbb{P}(\eta_c^*(E\vert_C))\bigr)\Bigr)}.
\end{align*}
This completes the proof.
\end{proof}
\begin{corl}\label{corl3.1}
Let $X$ be a smooth complex projective surface with
 $\overline{\NE}(X) = \bigl\{ a_1[C_1] + a_2[C_2] +\cdots+a_n[C_n] \mid a_i \in \mathbb{R}_{\geq 0}\bigr\}$ for  some irreducible curves $C_1,C_2,\cdots,C_n$  in $X$. If $E$ is a semistable vector bundle of rank $r\geq2$ on $X$ with vanishing
discriminant, i.e. $\triangle(E) := 2rc_2(E)-(r-1)c_1^2(E) = 0$, then 
\begin{center}
 $\Nef\bigl(\mathbb{P}(E)\bigr)
   =\Bigl\{ y_0\xi+\pi^*\gamma \mid \gamma\in N^1(X)_{\mathbb{R}},  y_0\geq 0,    y_0\mu(E\vert_{C_j}) + (\gamma\cdot C_j)\geq0 \text{ for all } 1\leq j\leq n\Bigr\}$,
\end{center}
where $\xi$  denotes the numerical equivalence class of 
the tautological bundle $\mathcal{O}_{\mathbb{P}(E)}(1)$.
\end{corl}
\begin{proof}
Let $C$ be an irreducible curve in $X$ such that $[C] = \sum\limits_{i=1}^n x_{i}[C_{i}] \in N_1(X)_{\mathbb{R}}$ for some $x_i \in \mathbb{R}_{\geq 0}$. As $E$ is semistable with vanishing discriminant, applying Theorem 1.2 in \cite{B-B} to the map
$i \circ \eta_c : \tilde{C} \longrightarrow  C  \hookrightarrow X$, we get that $\eta_c^*\bigl(E\vert_C\bigr)$ is also semistable bundle on $\tilde{C}$ for any irreducible curve $C$ in $X$. Using Theorem 3.1 in \cite{M}, we have
\begin{align*}
 \Nef\bigl(\mathbb{P}(\eta_c^*(E\vert_C))\bigr) = \Bigl\{ a\bigl(\xi_{\tilde{c}}-\mu(\eta_c^*(E\vert_C))f_{\tilde{c}}\bigr) + bf_{\tilde{c}} \mid a,b \in \mathbb{R}_{\geq 0}\Bigr\}.
\end{align*} 
We define $l_c:= \deg\bigl(\eta_c^*(E\vert_C)\bigr)-\mu\bigl(\eta_c^*(E\vert_C)\bigr).$ 
\vspace{2mm}

Applying duality, we then get
\begin{align*}
\overline{\NE}\bigl(\mathbb{P}(\eta_c^*(E\vert_C))\bigr) = \Bigl\{a(\xi_{\tilde{c}}^{r-2}f_{\tilde{c}}) + b(\xi_{\tilde{c}}^{r-1}-l_c\xi_{\tilde{c}}^{r-2}f_{\tilde{c}})\mid a,b \in\mathbb{R}_{\geq 0}\Bigr\}.
\end{align*}

Therefore, by Theorem \ref{thm1.1} we have
\vspace{2mm}

  $\overline{\NE}\bigl(\mathbb{P}(E)\bigr)=\overline{\sum\limits_{C\in \Gamma}(\psi_{c})_* \Bigl(\overline{\NE}\bigl(\mathbb{P}(\eta_c^*(E\vert_C))\bigr)\Bigr)} = \overline{\sum\limits_{C\in \Gamma}\Bigl[ \mathbb{R}_{\geq 0}(\xi^{r-2}F) + \mathbb{R}_{\geq 0}( \xi^{r-1}\pi^*[C] - l_c\xi^{r-2}F)\Bigr]}$,
  
  where $F$ is the numerical equivalence class of the fibre of the projection map $\pi : \mathbb{P}(E)\longrightarrow X$.
  \vspace{3mm}

Also,
$\mu\bigl(\eta_{c}^*(E\vert_C)\bigr) = \frac{\deg(E\vert_C)}{r}$
$= \frac{\sum\limits_{i=1}^nx_i\deg(E\vert_{C_i})}{r}$
$= \sum\limits_{i=1}^nx_i\mu\bigl(\eta_{c_i}^*(E\vert_{C_i})\bigr)$,
\vspace{1mm}

$l_{c_i} = \deg(\eta_{c_i}^*(E\vert_C)\bigr) - \mu\bigl(\eta_{c_i}^*(E\vert_{C_i})\bigr) = \frac{r-1}{r}\deg(E\vert_{C_i})$ for every $ 1\leq i \leq n$, and 
$l_c = \sum\limits_{i=1}^n x_il_{c_i}.$
\vspace{2mm}

This shows that $\overline{\NE}\bigl(\mathbb{P}(E)\bigr)$ is generated by
\begin{center}  
  $\Bigl\{ \xi^{r-2}F$ , $\xi^{r-1}\pi^*[C_i] - \frac{r-1}{r}\deg(E\vert_{C_i})\xi^{r-2}F \mid 1\leq i\leq n\Bigr\}$,
\end{center}

and the nef cone
\begin{align*}
 \Nef\bigl(\mathbb{P}(E)\bigr)
   =\Bigl\{ y_0\xi+\pi^*\gamma \mid \gamma\in N^1(X)_{\mathbb{R}},  y_0\geq 0,    y_0\mu(E\vert_{C_j}) + (\gamma\cdot C_j)\geq0 \text{ for all } 1\leq j\leq n\Bigr\}.
\end{align*}
\end{proof}
\begin{xrem}\label{xrem1}
\rm  Let $X$ be a smooth irregular complex projective surface $X$, i.e., $ q = H^1(X,\mathcal{O}_X) $\\
$\neq 0$, and the closed cone of curves $\overline{\NE}(X)$ is a finite polyhedron generated by classes of irreducible curves. Examples of such surfaces $X$ include ruled surface over a smooth curve of genus greater than 0 with closed cone of curves generated by irreducible curve classes, very general abelian surfaces with Picard number 1, del Pezzo surfaces etc. Then there exists non-split extension of the form 
 \begin{align*}
 0 \longrightarrow \mathcal{O}_C \longrightarrow E \longrightarrow \mathcal{O}_C \longrightarrow 0.
 \end{align*}
In this case, $E$ is a semistable bundle of rank 2 with vanishing discriminant. Moreover, for any positive integer $r$, the vector bundles of the forms $E^{\oplus r} \oplus \mathcal{O}_C$ and $E^{\oplus r}$ are examples of semistable bundles 
of ranks $2r+1$ and $2r$ respectively with vanishing discriminant. In this way, one can produce examples of semistable bundles with vanishing discriminant of any rank on such $X$. 

Also, for a semistable bundle $V$ on $X$ with discriminant 0, and a nonsplit extension of the form $0\longrightarrow V \longrightarrow E \longrightarrow V\longrightarrow 0$, $E$ is also semistable with vanishing discriminant. In all these cases, one can calculate the nef cones using Corollary \ref{corl3.1}.
\end{xrem}
\begin{exm}\label{exm3.2}
 \rm Let $ \rho : X = \mathbb{P}(W) \longrightarrow C$ be a ruled surface on a smooth complex projective curve $C$ defined by a normalized bundle (in the sense of Hartshorne [see \cite{Har1}, Ch 5]) $W$ with $\mu_{\min}(W) = \deg(W)$. Then $\overline{\NE}(X) = \{ a\zeta + bf \mid a,b\in \mathbb{R}_{\geq 0}\}$ is a finite polyhedron generated by classes of two irreducible curves, where $\zeta = \bigl[\mathcal{O}_X(\sigma)\bigr]$ (here $\sigma$ is the normalized section such that $\mathcal{O}_{\mathbb{P}(W)}(1) \cong \mathcal{O}_X(\sigma)$, and $f$ is the numerical equivalence class of a fibre of $\rho$).
 Let $V$ be a semistable (resp. stable) vector bundle of rank $r$ and degree $d$ on $C$. Then the pullback bundle  $E : = \rho^*(V)$ is also semistable
 (resp. stable) on $X$  with discriminant 
 $$\triangle(E) = \triangle(\rho^*V) =  2rc_2(\rho^*V)-(r-1)c_1^2(\rho^*V) = 0 -(r-1)df\cdot df = 0.$$
 Therefore, one can calculate the nef cone $\Nef\bigl(\mathbb{P}(E)\bigr)$ in this case using Corollary \ref{corl3.1}. Note that in this case $\mathbb{P}(E) \cong \mathbb{P}(W)\times_C \mathbb{P}(V)$, and nef cone of the fiber product $\Nef\bigl(\mathbb{P}(W)\times_C \mathbb{P}(V)\bigr)$ is already known due to \cite{K-M}. Our result in this case matches with the result in \cite{K-M}.
 
 For example, any indecomposable normalized rank 2 bundle $W$ over an elliptic curve $C$ is one of the following type (see Proposition 2.15, Chapter V, \cite{Har1}):
\begin{itemize}
\item
Either (i) $0\longrightarrow\mathcal{O}_C\longrightarrow W \longrightarrow \mathcal{O}_C \longrightarrow 0 $,

\item or \hspace{5mm} (ii) $0\longrightarrow\mathcal{O}_C\longrightarrow W \longrightarrow \mathcal{O}_C(p) \longrightarrow 0 $\hspace{5mm} 
for some closed point $p\in C$.
\end{itemize}

In both the cases, $\deg(W) \geq 0$, and hence $W$ is semistable.

We consider the ruled surface $\rho : X=\mathbb{P}(W) \longrightarrow C$ over the elliptic curve $C$ defined by the nonsplit extension 
$0\longrightarrow\mathcal{O}_C\longrightarrow W \longrightarrow \mathcal{O}_C \longrightarrow 0$.
Let us define $[C_1] := \bigl[\mathcal{O}_X(\sigma)\bigr] = \bigl[\mathcal{O}_{\mathbb{P}(W)}(1)\bigr]$, and $[C_2]:= f =$ numerical class of a fibre of the map $\rho$. Then $\overline{\NE}(X) = \{ a[C_1] + b[C_2] \mid a,b\in \mathbb{R}_{\geq 0}\}$.

Let $V$ be the bundle on $C$ which sits in the
nonsplit extension $0\longrightarrow\mathcal{O}_C\longrightarrow V \longrightarrow \mathcal{O}_C(p) \longrightarrow 0$ for some closed point $p$ in $C$. Then, $E:=\rho^*(V)$ is 
a stable bundle on $X$ with $\triangle(E) =0$. Note that, in this example, the Picard number of $\mathbb{P}(E)$ is 3.

Now, 
\begin{align*}
\deg(E\vert_{C_1}) = c_1(\rho^*V)\cdot \zeta = \deg(V)\zeta\cdot f = 1 ; \deg(E\vert_{C_2}) = c_1(\rho^*V)\cdot f = \deg(V)f\cdot f = 0.
\end{align*}
\begin{align*}
 b_{11} := C_1\cdot C_1 = \zeta^2 = \deg(W) = 0; \hspace{2mm} b_{12} = b_{21} := C_1\cdot C_2 = \zeta\cdot f = 1; \hspace{2mm} b_{22} := C_2\cdot C_2 = f^2 = 0.\end{align*} 
$y_0\mu(E\vert_{C_1}) + y_1b_{11} + y_2b_{21} = \frac{y_0}{2} +y_2$ ; $y_0\mu(E\vert_{C_2}) + y_1b_{12} + y_2b_{22} = y_1$.

Therefore, for the projective bundle $\pi : \mathbb{P}(E) \longrightarrow X$, the nef cone is 
\begin{center}
$\Nef\bigl(\mathbb{P}(E)\bigr) = \Bigl\{ y_0\xi + y_1(\pi^*\zeta) + y_2(\pi^*f) \mid y_0 \geq 0 , y_1 \geq 0, \frac{y_0}{2} + y_2 \geq 0\Bigr\}.$
\end{center}
\end{exm}
\begin{exm}\label{3.3}
\rm Let $\rho : X = \mathbb{P}(W) \longrightarrow C$ be a ruled surface over a smooth elliptic curve $C$ defined by the rank two bundle 
$W = \mathcal{O}_C \oplus \mathcal{O}_C$. Then $\overline{\NE}(X) = \{ a\zeta + bf \mid a,b\in \mathbb{R}_{\geq 0}\}$, where 
$\zeta = \bigl[\mathcal{O}_{\mathbb{P}(W)}(1)\bigr] \in N^1(X)$ and $f$ is the numerical equivalence class of a fibre of $\rho$.

Let $\rho_x : \tilde{X}_x  =  \Bl_x(X) \longrightarrow X$ be the blow up of $X$ at a closed point $x$ in a section $\sigma$ such that $\mathcal{O}_X(\sigma)\cong \mathcal{O}_{\mathbb{P}(W)}(1)$, and $E_x$ be the exceptional divisor. Then we claim that
$\overline{\NE}(\tilde{X}_x) =\bigl\{ a[C_1] + b[C_2] + c[C_3] \mid a,b,c \in \mathbb{R}_{\geq 0}\bigr\}$, where
$[C_1]=\rho^*_xf - E_x$, $[C_2]=\rho^*_x\zeta - E_x$ and $ [C_3]= E_x$. 

To prove our claim enough to show that if $V$ is an irreducible curve in $X$ which is not a fibre i.e. $[V]=a\zeta+b f$; $a> 0$ and $b\geq 0$, then $\text{mult}_{x}V=r\leq a$.

Let $F$ be a fibre passing through $x$. Then $\text{mult}_{x}F=1$. Hence
 $$a=V\cdot F=\sum_{P\in V\cap F}(V.F)_P\geq (V.F)_x\geq r \text{ (see \cite{Har1}, Chapter V, Proposition 1.4) }$$

Now we also have 

$b_{11} := C_1\cdot C_1 = (\rho^*_xf - E_x)\cdot(\rho^*_xf - E_x) = E_x^2 = -1$ ; 

$b_{22} := C_2\cdot C_2 =(\rho^*_x\zeta - E_x)\cdot(\rho^*_x\zeta - E_x) = \deg(W) - 1 = -1$ ;
$b_{33} := C_3\cdot C_3 = E_x^2 = -1$.

$b_{12} = b_{21} := C_1\cdot C_2=(\rho^*_xf - E_x)\cdot(\rho^*_x\zeta - E_x) = \zeta\cdot f+ E_x^2 = 0$ ; 

$b_{13} = b_{31} := C_1\cdot C_3 = (\rho^*_xf - E_x)\cdot E_x = 1$ ;
$b_{23} = b_{32} := C_2\cdot C_3 = (\rho^*_x\zeta - E_x)\cdot E_x=1$ ;

Let $E = \rho^*_x\bigl(\rho^*(V)\bigr)$, where $V$ is the semistable bundle on $C$ given by the nonsplit extension $0\longrightarrow\mathcal{O}_C\longrightarrow V \longrightarrow \mathcal{O}_C(p) \longrightarrow 0 $ 
for some closed point $p\in C$. Then $E$ is also semistable bundle with $\triangle(E) = 0$.
Note that, in this example, the Picard number of $\mathbb{P}(E)$ is 4.

$\deg(E\vert_{C_1}) = c_1\bigl(\rho^*_x\bigl(\rho^*(V)\bigr)\bigr)\cdot (\rho^*_xf - E_x) = 0$ ;
$\deg(E\vert_{C_2}) = c_1\bigl(\rho^*_x\bigl(\rho^*(V)\bigr)\bigr)\cdot (\rho^*_x\zeta - E_x) = 1$;

$\deg(E\vert_{C_3}) = c_1\bigl(\rho^*_x\bigl(\rho^*(V)\bigr)\bigr)\cdot  E_x = 0$.

$y_0\mu(E\vert_{C_1}) + y_1b_{11} + y_2b_{21} +y_3b_{31} = y_3 -y_1$ ;
$y_0\mu(E\vert_{C_2}) + y_1b_{12} + y_2b_{22} +y_3b_{32} = \frac{y_0}{2} + y_3 -y_2$ ;

$y_0\mu(E\vert_{C_3}) + y_1b_{13} + y_2b_{23} +y_3b_{33} = y_1 +y_2 -y_3$.
\vspace{1mm}

Therefore, the nef cone of the projective bundle $\pi : \mathbb{P}(E) \longrightarrow \tilde{X}_x$ is 
\begin{align*}
 \Nef\bigl(\mathbb{P}(E)\bigr) = \Bigl\{ y_0\xi+\sum\limits_{i=1}^3y_i\pi^*(C_i) \mid  y_0\geq 0, y_3 -y_1\geq 0, \frac{y_0}{2} + y_3 -y_2\geq 0,  y_1 +y_2 -y_3\geq 0\Bigr\}.
\end{align*}
\end{exm}
\begin{corl}\label{corl3.4}
 Let $X$ be a smooth complex projective surface such that
 $\overline{\NE}(X) = \bigl\{ a_1[C_1] + a_2[C_2] +\cdots+a_n[C_n] \mid a_1,\cdots,a_n \in \mathbb{R}_{\geq 0}\bigr\}$ for some irreducible curves
 $C_1,C_2,\cdots,C_n$ of $X$. 
 Let $E = L_1\oplus L_2\oplus\cdots\oplus L_r$ be a completely decomposable vector bundle of rank $r$ on $X$. 
 Let $\xi$  be the numerical class of $\mathcal{O}_{\mathbb{P}(E)}(1)$.
 Then the nef cone
 
  $\Nef\bigl(\mathbb{P}(E)\bigr)$
  
  $=\Bigl\{y_0\xi+\pi^*\gamma \mid \gamma \in N^1(X)_{\mathbb{R}}, y_0\geq0,   y_0\min\{C_i\cdot L_j\mid 1\leq j\leq r\} + (\gamma\cdot C_i) \geq 0$ for all $1\leq i \leq n\Bigr\}$.
 
\begin{proof}
 Let $C$  be an irreducible curve in $X$ numerically equivalent to $\sum\limits_{i=1}^n x_iC_i$ for $x_i \in \mathbb{R}_{\geq 0}$ for all 
 $i$ and let $\eta_c:\tilde{C}\longrightarrow C$ be its normalization.
 Then, by Lemma 2.1 in \cite{Fu}
 \vspace{2mm}
 
  $\Nef\bigl(\mathbb{P}(\eta_c^*(E\vert_C))\bigr) = \Bigl\{ a\bigl(\xi_{\tilde{c}}-\mu_{\min}(\eta_c^*(E\vert_C))f_{\tilde{c}}\bigr) + bf_{\tilde{c}} \mid a,b \in \mathbb{R}_{\geq 0}\Bigr\}$,
 \vspace{2mm}
 
  $\Nef\bigl(\mathbb{P}(\eta_{c_i}^*(E\vert_{C_{i}}))\bigr) =  \Bigl\{ a\bigl(\xi_{\tilde{c_i}}-\mu_{\min}(\eta_{c_i}^*(E\vert_{C_i}))f_{\tilde{c_i}}\bigr) + bf_{\tilde{c_i}} \mid a,b \in \mathbb{R}_{\geq 0}\Bigr\}$
  for each $i\in\{1,2,\cdots,n\}$.
  
  For any curve $C\subset X$, we define
  \begin{center}
  $l_{c} = \deg\bigl(\eta_{c}^*(E\vert_{C})\bigr) - \min\Bigl\{C\cdot L_j\mid1\leq j\leq r\Bigr\}=\deg\bigl(\eta_{c}^*(E\vert_{C})\bigr) -\mu_{\min}\bigl(\eta_{c}^*(E\vert_C)\bigr)$  
  \end{center}
  
  Applying duality, we also have
  \vspace{2mm}
  
  $\overline{\NE}\bigl(\mathbb{P}(\eta_c^*(E\vert_C))\bigr) = \Bigl\{a\bigl(\xi_{\tilde{c}}^{r-2}f_{\tilde{c}}\bigr) + b\bigl(\xi_{\tilde{c}}^{r-1}-l_c\xi_{\tilde{c}}^{r-2}f_{\tilde{c}}\bigr)\mid a,b \in \mathbb{R}_{\geq 0}\Bigr\}$,
  \vspace{2mm}
  
  $\overline{\NE}\bigl(\mathbb{P}(\eta_{c_i}^*(E\vert_{C_i}))\bigr) = \Bigl\{a\bigl(\xi_{\tilde{c_i}}^{r-2}f_{\tilde{c_i}}\bigr) + b\bigl(\xi_{\tilde{c_i}}^{r-1}-l_{c_i}\xi_{\tilde{c_i}}^{r-2}f_{\tilde{c_i}}\bigr)\mid a,b \in \mathbb{R}_{\geq 0}\Bigr\}$ for each $i\in\{1,2,\cdots,n\}$.
  \vspace{4mm}
  
  Observe that 
  \begin{center}
  $ \sum_{i=1}^n x_i\min\{C_i\cdot L_j\mid 1\leq j\leq r\}\leq \min\{\sum_{i=1}^n x_i(C_i\cdot L_j)\mid 1\leq j\leq r\}=\mu_{\min}(\eta_{c}^*(E\vert_C))$,
  \end{center}

  and $\deg(\eta_c^*(E\vert_C))=\sum_{i=1}^n x_i\deg(\eta_{c_i}^*(E\vert_{C_i})) $. Therefore
$$l_c\leq \sum_{i=1}^n x_il_{c_i}.$$

  Recall from Theorem \ref{thm1.1} that
  \vspace{2mm}
  
  $\overline{\NE}\bigl(\mathbb{P}(E)\bigr)$
  $=\overline{\sum\limits_{C\in \Gamma}\Bigl[\mathbb{R}_{\geq 0}(\xi^{r-2}F) + \mathbb{R}_{\geq 0}\bigl\{ \xi^{r-1}\pi^*[C] - \bigl(\deg(\eta_c^*(E\vert_C)) - \mu_{\min}(\eta_{c}^*(E\vert_C))\bigr) \xi^{r-2}F\bigr\}\Bigr]}$
  \vspace{3mm}
  
   $=\overline{\sum\limits_{C\in \Gamma}\Bigl[\mathbb{R}_{\geq 0}(\xi^{r-2}F) + \mathbb{R}_{\geq 0}\bigl\{ \xi^{r-1}\pi^*[C] - l_{c}\xi^{r-2}F\bigr\}\Bigr]}$
   \vspace{3mm}
  
  $= \overline{\sum\limits_{C \equiv \sum\limits_{i}x_iC_i}\Bigl[\mathbb{R}_{\geq 0}(\xi^{r-2}F) +\mathbb{R}_{\geq 0}\{\sum\limits_{i=1}^nx_i(\xi^{r-1}\pi^*[C_i] - l_{c_i}\xi^{r-2}F)+(\sum_{i=1}^n x_il_{c_i}-l_c)\xi^{r-2}F\bigl\}\Bigr]}$.
  \vspace{3mm}
  
  This shows that $\overline{\NE}\bigl(\mathbb{P}(E)\bigr)$ is generated by 
  \begin{center}
  $\Bigl\{ \xi^{r-2}F,\bigl(\xi^{r-1}\pi^*[C_i] - l_{c_i}\xi^{r-2}F\bigr)\mid 1\leq i\leq r\Bigr\}$.
  \end{center}
  
  Therefore
  \begin{center}
  $\Nef\bigl(\mathbb{P}(E)\bigr)=\Bigl\{y_0\xi+\pi^*\gamma \mid \gamma \in N^1(X)_{\mathbb{R}}, y_0\geq0,   y_0\mu_{\min}\bigl(\eta_{c_i}^*(E\vert_{C_i})\bigr) + (\gamma\cdot C_i) \geq 0$ $\forall 1\leq i \leq n\Bigr\}.$
 \end{center}
 
  But $\mu_{\min}(\eta_{c_i}^*(E\vert_{C_{i}}))=\min\{C_i\cdot L_j\mid 1\leq j\leq r\}$ for each $i\in\{1,2,\cdots,n\}$. Hence the result follows.
\end{proof}
\end{corl}
A particular case of Corollary \ref{corl3.4} is the following.
\begin{corl}\label{cor3.5}
Let $X$ be a smooth complex projective surface with Picard number 1 and $L_X$ be an ample generator of the real N\'{e}ron Severi group $N^1(X)_{\mathbb{R}}$. 
 Let $E = M_1\oplus M_2\oplus\cdots\oplus M_r$ be a completely decomposable vector bundle of rank $r\geq2$ on $X$ 
 such that $M_i \equiv a_iL_X \in N^1(X)_{\mathbb{R}}$ for each $i\in \{ 1,2,\cdots,r\}$ and $a_1\leq a_2\leq a_3 \leq\cdots\leq a_r$. 
  Then
 \begin{center}
   $\Nef\bigl(\mathbb{P}(E)\bigr) =\bigl\{ y_0\xi + y_1\pi^*L_X \mid y_0\geq 0, y_0a_1 + y_1\geq 0\bigr\}.$
  \end{center}
 In particular, if $X = \mathbb{P}^2$, and $E = \mathcal{O}_{\mathbb{P}^2}(a_1)\oplus \mathcal{O}_{\mathbb{P}^2}(a_2) \oplus\cdots\oplus \mathcal{O}_{\mathbb{P}^2}(a_r)$
 a completely decomposable bundle on $\mathbb{P}^2$ with $a_1\leq a_2\leq a_3 \leq\cdots\leq a_r$, then 
 \begin{align*}
 \Nef\bigl(\mathbb{P}(E)\bigr) = \bigl\{ y_0\xi + y_1\pi^*H \mid y_0\geq 0, y_0a_1 + y_1\geq 0\bigr\},
 \end{align*} 
 where $H$ is the numerical equivalence class of $\mathcal{O}_{\mathbb{P}^2}(1)$ in $N^1(\mathbb{P}^2)_{\mathbb{R}}$.
 \begin{proof}
 Let $m$ be the least positive integer such that $H^0(mL_X)\neq 0$ and $C_0 \in \vert mL_X \vert$ be an irreducible curve. Then we have $$\overline{\NE}(X) \subseteq \mathbb{R}_{\geq 0}L_X = \mathbb{R}_{\geq 0}\frac{1}{m}[C_0] \subseteq \Nef(X).$$ 
 
 Hence $\overline{\NE}(X) = \mathbb{R}_{\geq 0}[C_0]$.
 Now $\min\{C_0\cdot L_j \mid 1\leq j \leq r\} = \min\{ ma_jL_X^2 \mid 1\leq j \leq r\} = ma_1L_X^2$
  
  Thus by the previous Corollary \ref{corl3.4} we have
  \begin{center}
   $\Nef\bigl(\mathbb{P}(E)\bigr) = \bigl\{y_0\xi + y_1\pi^*L_X \mid y_0 \geq 0, y_0\min\{C_0\cdot L_j \mid 1\leq j \leq r\}+ (y_1L_X\cdot C_0)\geq 0 \bigr\}$
   
   $= \bigl\{ y_0\xi + y_1\pi^*L_X \mid y_0\geq 0, y_0a_1+y_1 \geq 0\bigr\}.$
  \end{center}
\end{proof}
 \end{corl}
 \begin{xrem}\label{xrem2}
  \rm We known that $\mathbb{P}^2$ is a toric variety and any line bundle is equivariant. Then the second part of Corollary \ref{cor3.5}, $\mathbb{P}(E)$ is toric variety where $E = \mathcal{O}_{\mathbb{P}^2}(a_1)\oplus \mathcal{O}_{\mathbb{P}^2}(a_2) \oplus\cdots\oplus \mathcal{O}_{\mathbb{P}^2}(a_r)$. For more detailed description of the fan structure of $\mathbb{P}(E)$ check \cite{O}, page 53. If $\Delta$ is the fan of $\mathbb{P}(E)$, then it is clear from the construction that the support of $\Delta$ has full dimension. The mori cone of curves of $\mathbb{P}(E)$ is (see \cite{CLS}, Theorem 6.3.20)
  $$\overline{\text{NE}}(\mathbb{P}(E))=\sum_{\tau \text{ a wall of } \Delta }\mathbb{R}_{\geq 0} [V(\tau)]$$
  Hence the dual of the mori cone gives the Nef cone of $\mathbb{P}(E)$.
 \end{xrem}

\begin{exm}\label{exm4.11}
\rm Let $ \phi_x: X = \tilde{\mathbb{P}}^2_x = \Bl_x\mathbb{P}^2\longrightarrow \mathbb{P}^2$ be the blow-up of $\mathbb{P}^2$ at a closed point
$x\in \mathbb{P}^2$ with exceptional divisor $\phi_x^{-1} (x) = E_x$.
Then, $\overline{\NE}(X) = \bigl\{ a[C_1] + b[C_2] \mid a,b \in \mathbb{R}_{\geq 0}\bigr\}$, where $[C_1] = \bigl[\phi_x^*\bigl(\mathcal{O}_{\mathbb{P}^2}(1)\bigr) - E_x\bigr]$, and
$[C_2] = [E_x]$. Then, $b_{11} = C_1 \cdot C_1 = 0$ , $b_{12} = b_{21} = C_1\cdot C_2 = 1$ , $b_{22} = C_2 \cdot C_2 = -1 $.

Let us consider the rank 2
bundle $ E = \phi_x^*\bigl(\mathcal{O}_{\mathbb{P}^2}(1)\bigr) \oplus \mathcal{O}_X(E_x)$ on $X$. Then, $\mathbb{P}(E)$ has Picard number 3. Fix the notations $L_1 = \phi_x^*(\mathcal{O}_{\mathbb{P}^2}(1))$ ,
$L_2 = \mathcal{O}_X(E_x)$ , and $a_{ij} = C_i\cdot L_j$ for $1 \leq i \leq 2$, $1\leq j \leq 2$. 
Then, $a_{11} = 1$ , $a_{12} = 1$ , 
$a_{21} = 0$ , $a_{22} = -1$ , $\deg(E\vert_{C_1}) = 2 $ , $\deg(E\vert_{C_2}) = -1 $.\\
$\mu_{\min}\bigl(\eta_{c_1}^*(E\vert_{c_1})\bigr) = \min\{ a_{1j} \mid 1 \leq j \leq 2\} = 1$ , $\mu_{\min}\bigl(\eta_{c_2}^*(E\vert_{c_2})\bigr) = \min\{ a_{2j} \mid 1 \leq j \leq 2\} = -1$ ,
 
Therefore, $y_0\xi+\sum\limits_{i=1}^2 y_i(\pi^*[C_i]) \in N^1\bigl(\mathbb{P}(E)\bigr)_{\mathbb{R}}$ is in $\Nef\bigl(\mathbb{P}(E)\bigr)$ if and only if 
 $y_0 \geq 0$ , \\
 $ y_0\mu_{\min}\bigl(\eta_{c_1}^*(E\vert_{c_1})\bigr)+\sum\limits_{i=1}^2 y_ib_{i1}  = y_0 + y_2 \geq 0$ and
 $ y_0\mu_{\min}\bigl(\eta_{c_2}^*(E\vert_{c_2})\bigr)+\sum\limits_{i=1}^2 y_ib_{i2} = y_1 - y_0 - y_2 \geq 0$, i.e.
 $$\Nef\bigl(\mathbb{P}(E)\bigr)=\Bigl\{y_0\xi+\sum\limits_{i=1}^2 y_i(\pi^*[C_i])\mid y_0\geq 0, y_0 + y_2 \geq 0 \text{ and } y_1 - y_0 - y_2 \geq 0\Bigr\}.$$
 
 Also note that $X$ is a toric variety. The fan structure of $X$ is given in \cite{CLS}, Definition 3.3.17. Similarly, as it is mentioned in Remark \ref{xrem2}, one can give the toric description of the Nef cone of $X$.
\end{exm}
\begin{exm}\label{exm4.12}
\rm Let 
 $\rho : X =\mathbb{P}(W) \longrightarrow C$ be a ruled surface over a smooth curve $C$, defined by an
 unstable normalized rank 2 bundle $W = \mathcal{O}_C \oplus M$ for some line bundle $M$ on $C$ with $\deg(M) = l < 0$.
 Then, $\overline{\NE}(X) = \{ a\zeta + bf \mid a,b \in \mathbb{R}_{\geq 0}\}$, where 
$\bigl[\mathcal{O}_{\mathbb{P}(W)}(1)\bigr] = \zeta \in N^1(X)_{\mathbb{R}}$ and $f$ is the numerical class of a fibre of $\rho$. 

Let $\rho_x : \tilde{X}_x  = \Bl_x(X) \longrightarrow X$ be the blow up of $X$ at a closed point $x$ in the section $\sigma$ such that $\mathcal{O}_X(\sigma)\cong \mathcal{O}_{\mathbb{P}(W)}(1)$, and $E_x$ be the
exceptional divisor. Then,
$\overline{\NE}(\tilde{X}_x) =\bigl\{ a[C_1] + b[C_2] + c[C_3] \mid a,b,c \in \mathbb{R}_{\geq 0}\bigr\}$, where
$[C_1]= \rho^*_xf - E_x$, $[C_2] = \rho^*_x\zeta - E_x$, $ [C_3] = E_x$ (see Example \ref{3.3}). The intersection products,

$b_{11} := C_1\cdot C_1 = (\rho^*_xf - E_x)\cdot(\rho^*_xf - E_x) = E_x^2 = -1$ ; $b_{33} := C_3\cdot C_3 = E_x^2 = -1$;

$b_{22} := C_2\cdot C_2 =(\rho^*_x\zeta - E_x)\cdot(\rho^*_x\zeta - E_x) = \deg(W) - 1 = l -1$ ;

$b_{12} = b_{21} := C_1\cdot C_2=(\rho^*_xf - E_x)\cdot(\rho^*_x\zeta - E_x) = \zeta\cdot f+ E_x^2 = 0$ ; 

$b_{13} = b_{31} := C_1\cdot C_3 = (\rho^*_xf - E_x)\cdot E_x = 1$ ; $b_{23} = b_{32} := C_2\cdot C_3 = (\rho^*_x\zeta - E_x)\cdot E_x=1$.

Let us consider the rank 2 bundle $E = \mathcal{O}_{\tilde{X}_x}\oplus\mathcal{O}_{\tilde{X}_x}(nE_x)$ on $\tilde{X}_x$, for some $n \in \mathbb{Z}$ with $ n\geq 0 $. Then, $\mathbb{P}(E)$ has Picard number 4 in this case.
Fix the notations $L_1 := \mathcal{O}_{\tilde{X}_x}$, $L_2 := \mathcal{O}_{\tilde{X}_x}(nE_x)$, and $a_{ij} = C_i\cdot L_j$ 
for $1\leq i \leq 3$, $1\leq j\leq 2$. Then, $a_{11}=0$ , $a_{12} = n$ , $a_{21} = 0$, $a_{22} = n$ , $a_{31} = 0$ , $a_{32} = -n$. Hence
 $\mu_{\min}\bigl(\eta_{c_1}^*(E\vert_{C_1})\bigr) = \min\{ a_{1j} \mid 1 \leq j \leq 2\} =  0$ ; 
 $ \mu_{\min}\bigl(\eta_{c_2}^*(E\vert_{C_2})\bigr) = \min\{ a_{2j} \mid 1 \leq j \leq 2\} =  0 $ ;
 $\mu_{\min}\bigl(\eta_{c_3}^*(E\vert_{C_3})\bigr) = \min\{ a_{3j} \mid 1 \leq j \leq 2\} = - n $.
 
Also, $\deg(E\vert_{C_1}) = (\mathcal{O}_{\tilde{X}_x} \oplus nE_x)\cdot(\rho^*_xf - E_x) = n$ ; 

Similarly, $\deg(E\vert_{C_2}) =n$ ; $\deg(E\vert_{C_3}) = -n$. 
 
 Therefore, $y_0\xi+\sum\limits_{i=1}^3 y_i(\pi^*[C_i]) \in N^1\bigl(\mathbb{P}(E)\bigr)_{\mathbb{R}}$ is in $\Nef\bigl(\mathbb{P}(E)\bigr)$  if and only if
 $y_0 \geq 0$ , \\
 $ y_0\mu_{\min}\bigl(\eta_{c_1}^*(E\vert_{C_1})\bigr)+\sum\limits_{i=1}^3 y_ib_{i1} = - y_1 + y_3 \geq 0$,\\
 $ y_0\mu_{\min}\bigl(\eta_{c_2}^*(E\vert_{C_2})\bigr)+\sum\limits_{i=1}^3 y_ib_{i2} =(l-1)y_2 +y_3 \geq 0$ and \\
 $ y_0\mu_{\min}\bigl(\eta_{c_3}^*(E\vert_{C_3})\bigr)+\sum\limits_{i=1}^3 y_ib_{i3} = -ny_0 + y_1 + y_2 -y_3 \geq 0$.
\end{exm}
\section{Seshadri Constants}
 We begin this section by the following easy observation.
 \begin{lem}
  Let $Z\subseteq X$ be a closed subvariety in a smooth complex projective variety $X$. For a vector bundle $V$  on $X$ and  for any point $x\in Z$ 
  \begin{center}
   $0<\varepsilon(V,x) \leq \varepsilon(V\vert_Z,x).$
  \end{center}
\begin{proof}
 We have the following commutative diagram.
\begin{center}
\begin{tikzcd} 
 \Bl_xZ \arrow[r, "j"] \arrow[d, "\psi_x"]
 & \Bl_xX \arrow[d,"\rho_x"]\\
 Z \arrow[r, "i" ]
 & X
\end{tikzcd}
\end{center}
where $j$ and $i$ are inclusions. Note that the exceptional divisor in $\Bl_xZ$ is the restriction of the exceptional divisor in $\Bl_xX$, as $\Bl_xZ$ is the strict transform of $Z$ in $\Bl_xX$.

Also we have the following commutative diagram.
\begin{center}
\begin{tikzcd} 
 \mathbb{P}\bigl(\psi^*_x(V\vert_Z)\bigr) = \mathbb{P}(j^*\rho_x^*V) \arrow[r, "l "] \arrow[d, "\pi' "]
 & \mathbb{P}(\rho_x^*V) \arrow[d," \pi "]\\
 \Bl_xZ  \arrow[r, "j" ]
 & \Bl_xX
\end{tikzcd}
\end{center}
In the above diagram we have the inclusion $\mathbb{P}\bigl(\psi^*_x(V\vert_Z)\bigr) \hookrightarrow \mathbb{P}(\rho_x^*V) $ and $$\mathcal{O}_{\mathbb{P}(\rho_x^*V)}(1)\vert_{\mathbb{P}(\psi^*_x(V\vert_Z))} = \mathcal{O}_{\mathbb{P}(\psi^*_x(V\vert_Z))}(1).$$
As restriction of nef divisor is nef, we conclude from the definition of Seshadri constant that
\begin{center}
   $0<\varepsilon(V,x) \leq \varepsilon(V\vert_Z,x).$
  \end{center}
\end{proof}
\end{lem}

 We recall from [\cite{Har1},Chapter 5] that a vector bundle $W$ of rank 2 on a smooth projective curve $C$ is said to be normalized if $H^0(W) \neq 0$, but $H^0(W \otimes L) = 0$ for all line bundle $L$ on $C$ with $\deg(L) < 0$. We notice that a normalized bundle $W$ is semistable if and only if $\deg(W) \geq 0$.
\begin{thm}\label{thm4.2}
Let $\rho : Y =\mathbb{P}(W) \longrightarrow C$ be a ruled surface over a smooth curve defined by a normalized rank 2 bundle $W$ such that $\mu_{\min}(W) = \deg(W)$. Let $\sigma$ be a section of $\rho$ such that $\mathcal{O}_Y(\sigma) \cong \mathcal{O}_{\mathbb{P}(W)}(1)$ and  $f$ denotes a fibre of the map $\rho$.
If $V$ is a semistable ample bundle with discriminant $\triangle(V)=0$, then the Seshadri constant at a closed point $y\in \sigma$ is given by
\begin{align*}
 \varepsilon(V,y) = \min\Bigl\{\mu(V\vert_\sigma),\mu(V\vert_f)\Bigr\}.
\end{align*}
\end{thm}
\begin{proof}
Consider the following fibre product diagram.
\begin{center}
\begin{tikzcd} 
 \mathbb{P}(E) = \mathbb{P}\bigl(\rho^*_y(V)\bigr) \arrow[r, "\tilde{\rho_y}"] \arrow[d, "\pi"]
 & \mathbb{P}(V) \arrow[d,"\pi'"]\\
 X :=  \Bl_yY \arrow[r, "\rho_y" ]
 & Y
\end{tikzcd}
\end{center}

Since $\mu_{\min}(W)=\deg(W)$, we have $\overline{\NE}(Y) = \{a[\sigma]+b[f]\mid a,b\in\mathbb{R}_{\geq 0}\}$. We denote the exceptional divisor of the map $\rho_y$ by $E_y$. We also note that
$$\overline{\NE}(X) =\bigl\{ a[C_1] + b[C_2] + c[C_3] \mid a,b,c \in \mathbb{R}_{\geq 0}\bigr\},$$ where
 $C_1\equiv(\rho^*_y\sigma - E_y)$, $C_2\equiv(\rho^*_yf - E_y)$ and $ C_3\equiv E_y$ (see Example \ref{3.3}). 
Note that all three curves $C_1$, $C_2$ and $C_3$ are smooth.
We define $E:= \rho^*_y(V)$.

Now, if $V$ is a semistable ample bundle with $\triangle(V)=0$, then the pullback bundle $E=\rho^*_y(V)$ is also semistable bundle with $\triangle(E)=0$ by Theorem 1.2 in \cite{B-B}.  Let $\tilde{\xi}_y$ be the numerical equivalence class of the tautological bundle $\mathcal{O}_{\mathbb{P}(\rho^*_yV)}(1)$, and 
 $\tilde{E}_y := \tilde{\rho_y}^{-1}(F_y)$, where $F_y$ is the class of the fibre of the map $\pi'$ over the point $y$. Therefore, applying Corollary \ref{corl3.1} we  get that for a positive real number $\lambda >0$,
  $\tilde{\xi_y} - \lambda\tilde{E_y}$ is nef if and only if 
  
  $\mu(E\vert_{C_1}) - \lambda(C_3\cdot C_1) = \bigl(\mu(V\vert_{f}) - \lambda\bigr) \geq 0,$
  $\mu(E\vert_{C_2}) - \lambda(C_3\cdot C_2) = \bigl(\mu(V\vert_{\sigma}) -\lambda\bigr) \geq 0,$
  
  and $\mu(E\vert_{C_3}) - \lambda(C_3\cdot C_3) = \lambda \geq 0$.\\
Therefore, the Seshadri constant at a closed point $y\in \sigma$ is given by
\begin{center}
$ \varepsilon(V,y) =  \sup\Bigl\{ \lambda > 0 \mid \tilde{\xi_y} - \lambda\tilde{E_y}$  is nef $\Bigr\}=\min\Bigl\{\mu(V\vert_\sigma),\mu(V\vert_f)\Bigr\}.$
\end{center}
\end{proof}
\begin{xrem}
 \rm Let $X:\mathbb{P}\bigl(\mathcal{O}_C\oplus\mathcal{L}\bigr)\longrightarrow C$ be a ruled surface defined by a normalized rank 2 bundle $W = \mathcal{O}_C\oplus\mathcal{L}$ with $\deg(\mathcal{L})<0$.
 The above result generalizes known results about Seshadri constants of ample line bundles on such ruled surfaces (see Theorem 4.1 in \cite{Gar06}).
 \end{xrem}
 
\begin{exm}
\rm Let $V = L_1\oplus L_2\oplus\cdots\oplus L_r$ is a completely decomposable ample bundle on $\mathbb{P}^2$. Consider the following commutative fibred diagram.
\begin{center}
\begin{tikzcd} 
 \mathbb{P}(E) := \mathbb{P}\bigl(\phi_x^*(V)\bigr) \arrow[r, "\tilde{\phi_x}"] \arrow[d, "\pi"]
 & \mathbb{P}(V) \arrow[d,"\pi'"]\\
 X = \Bl_x\mathbb{P}^2 \arrow[r, "\phi_x" ]
 & \mathbb{P}^2
\end{tikzcd}
\end{center}
We define $E:= \phi_x^*(V)$. Let $l$ be a line in $\mathbb{P}^2$ passing through $x$ and $\phi_x^*(l) = H_x$. We denote the exceptional divisor of the map $\phi_x$ by $E_x$.
Then we  have $\overline{\NE}(X) = \bigl\{ a[C_1] + b[C_2] \mid a,b \in \mathbb{R}_{\geq 0}\bigr\}$, where $C_1\equiv H_x-E_x$, $C_2\equiv E_x$. Note that both $C_1$ and $C_2$ are smooth curves. We denote the numerical equivalence class of the tautological bundle $\mathcal{O}_{\mathbb{P}(E)}(1)$ by $\tilde{\xi_{x}}$, and define $\tilde{E_x}:= \pi^*(E_x)$.

Using Corollary \ref{corl3.4} we get that for a positive real number $\lambda > 0$, 
 $\tilde{\xi_x} - \lambda\tilde{E_x} = \tilde{\xi_x} - \lambda\pi^*[C_2]$ is nef if and only if
 
 $\lambda \geq 0$, and $\min\limits_{1\leq i\leq r}\bigl\{L_i\cdot l\bigr\} - \lambda  =\min\limits_{1\leq i\leq r}\bigl\{\deg(L_i)\bigr\} - \lambda = \mu_{\min}(V) - \lambda \geq 0$.
 
 Hence, the Seshadri constant of $V = L_1\oplus L_2\oplus\cdots\oplus L_r$ at $x\in\mathbb{P}^2$  is
\begin{center}
 $ \varepsilon(V,x) = \sup\Bigl\{ \lambda > 0 \mid \tilde{\xi_x} - \lambda\tilde{E_x}$  is nef $\Bigr\} = \mu_{\min}(V) =\min\limits_{1\leq i\leq r}\bigl\{\deg(L_i)\bigr\}.$
\end{center}
 A completely decomposable ample vector bundle on $\mathbb{P}^2$  is a special case of a torus equivariant vector bundle over a toric variety. The Seshadri constant of a nef  toric vector bundle on a toric variety is calculated in [\cite{HMP}, Proposition 3.2] which also agrees with our result.  
\end{exm}

 \subsection*{\rm ACKNOWLEDGEMENT}
 The authors would like to thank Prof. Indranil Biswas, Prof. D S Nagaraj and Dr. Krishna Hanumanthu for several useful discussions. The authors would also like to thank
the referee  for his/her valuable comments and useful suggestions towards the overall improvement of the content of this article. This work is supported financially by a fellowship from TIFR, Mumbai under DAE, Government of India.


\begin{thebibliography}{***}
\normalsize
\baselineskip=18pt

\bibitem{B-D-H-K-K-S-S}{Thomas Bauer, Sandra Di Rocco, Brian Harbourne, Micha\l{}  Kapustka,
Andreas Knutsen, Wioletta Syzdek, and Tomasz Szemberg :}
\emph{A primer on Seshadri constants,}
Contemp. Math. \bf 496\rm, Amer. Math. Soc., Providence, RI (2009).

\bibitem{B-B}{Indranil Biswas, Ugo Bruzzo :}
\emph{On Semistable Principal Bundles over a Complex Projective Manifold,}
International Mathematics Research Notices. Article ID rnn035 (2008).

\bibitem{BHN20}{Indranil Biswas, Krishna Hanumanthu and D.S. Nagaraj : }
\emph{Positivity of vector bundles on homogeneous varieties,}
International Journal of Mathematics. Vol. \bf 31\rm, No. \bf 12 \rm (2020), 2050097.

\bibitem{CLS} { David A. Cox, John B. Little, and Henry K. Schenck :} 
\emph{Toric varieties}, Volume \bf 124 \rm, Graduate Studies in
Mathematics. American Mathematical Society, Providence, RI. (2011).

\bibitem{Fu} {Mihai Fulger :}
\emph{The cones of effective cycles on projective bundles over curves,}
Math.Z. (2011), 449-459.

\bibitem{Gar06} {Luis Fuentes Garci\'{a} :} 
\emph{Seshadri constants on ruled surfaces: the rational and elliptic cases,}
Manuscripta Math. \bf 119\rm (2006), 483-505.

\bibitem{Hac} {Christopher Hacon :}
\emph{Remarks on Seshadri constants of vector bundles,}
Annales de l'institut Fourier. tome \bf 50\rm, no \bf 3 \rm (2000), 767-780.

\bibitem{Har1} {Robin Hartshorne :}
\emph{Algebraic Geometry,}
Graduate Text in Mathematics, Springer (1977).

\bibitem{HMP}{Milena Hering, Mircea Mustat\'{a} and Sam Payne :}
\emph{Positivity properties of toric vector bundles,}
 Ann. Inst. Fourier (Grenoble) \bf 60\rm, no. \bf 2 \rm (2010), 607-640.
 
\bibitem{K-M-R} {Rupam Karmakar, Snehajit Misra, Nabanita Ray :}
\emph{Nef and Pseudoeffective cones of product of projective bundles over a curve,}
Bull. Sci. Math., \bf 151 \rm (2019), 1-12.

\bibitem{K-M} {Rupam Karmakar and Snehajit Misra :}
\emph{Nef cone and Seshadri constants of product of projective bundles over  curves,}
Journal of the Ramanujan Mathematical Society, \bf 35(4) \rm (2020), 317-325.

\bibitem{L1}{ Robert Lazarsfeld :}
\emph{Positivity in Algebraic Geometry,}
Volume I, Springer (2007).

\bibitem{M} { Yoichi Miyaoka :}
\emph{The Chern classes and Kodaira Dimension of a Minimal Variety,}
Advanced Studies in Pure Mathematics \bf 10 \rm, Algebraic Geometry, Sendai, 1985. (1987) 449-476.

\bibitem{M-O-H}{ Roberto Mu\~{n}oz, Gianluca Occhetta, and Luis E. Sol\'{a} Conde :}
\emph{On rank 2 vector bundles on Fano manifolds,}
Kyoto Journal of Mathematics, Vol. \bf 54 \rm, No. \bf 1 \rm (2014), 167-197.

\bibitem{O}{ Tadao Oda :}
\emph{Convex bodies and algebraic geometry,}
Springer-Verlag Berlin Heidelberg, (1988).

\end{thebibliography}
\end{document}